\author{Asl{\i} Deniz} 
\title{A Landing Theorem for Periodic Dynamic Rays for Transcendental Entire Maps with Bounded Post-Singular Set}
\date{}
\theoremstyle{plain}
\newtheorem{thm}{Theorem}
\numberwithin{thm}{section}
\newtheorem{prop}[thm]{Proposition}
\newtheorem{lem}[thm]{Lemma}
\newtheorem*{thmmain}{Main Theorem}
\theoremstyle{definition}
\theoremstyle{definition}
\newtheorem{rem}[thm]{Remark}
\begin{document}
\maketitle
\begin{abstract}
In this article, we present a landing theorem for periodic dynamic rays for transcendental entire maps which have bounded post-singular sets, by using standard hyperbolic geometry results.
\end{abstract}

\section{Introduction}
We consider the dynamical system given by the iterates of an entire map $f: \mathbb{C}\to \mathbb{C}$ which we denote by $f^n=f \circ \overset{n}{\cdots} \circ f$. Our interest is to understand the long term behavior of the orbits under $f$, in terms of their initial condition. The dynamical plane splits into two totally invariant sets; the set of points with stable behavior, i.e., the domain of normality of the iterates $\{f^n\}_{n}$, and its complement, i.e., the set of points with nonstable behavior. These two sets are called the \textit{Fatou set} and the  \textit{Julia set}, respectively. Understanding the topology of these complementary sets is one of the main goals in the field. Another dynamically relevant set is the \textit{ escaping set}, which consists of orbits that escape to infinity under iteration of $f$.  Depending on the nature of the entire map (i.e., whether a polynomial, or transcendental), its escaping set is associated to either the Fatou set, or the Julia set.

In this paper we study some special objects known as "dynamic rays" which play an essential role in studying the topological structure of the Julia set. Roughly speaking, dynamic rays are unbounded curves formed by orbits which escape to infinity following a given pattern, that is, according to some symbolic dynamics structure. In polynomial dynamics, dynamic rays are part of the Fatou set, since the point at infinity is superattracting, and belongs to the Fatou set. Instead, in transcendental dynamics, infinity is an essential singularity, and hence dynamic rays belong to the Julia set.  To start with, we look at dynamic rays for polynomials  following \cite{douhub1984}, where the concept of rays was first introduced. Suppose $f$ is a monic polynomial of degree $d\geq 2$. Then the point at infinity is a superattracting fixed point with multiplicity $d-1$.  Let $\mathcal{K}(f)$ denote the \textit{filled-in Julia set} of $f$, i.e., the set of all points with bounded orbit. If $\mathcal{K}(f)$ is connected, or equivalently, if  $\mathcal{K}(f)$ contains all finite zeroes of $f'$ (known as \textit{critical points} of $f$), then the locally defined  B\"ottcher map -which conjugates the dynamics near $\infty$ to $z\mapsto z^d$ near $\infty$- extends to a conformal isomorphism $\phi:\mathbb{C}\backslash \mathcal{K}(f)\rightarrow \mathbb{C}\backslash\overline{\mathbb{D}}$. \textit{A dynamic ray of argument $\theta$} is defined by the inverse image  $\phi^{-1}\{r e^{2\pi i\theta},r>1\}$, which we denote by $R_{f}(\theta)$.

A dynamic ray $R_{f}(\theta)$ is $k$-periodic, if there exists a minimal number $k\geq 1$  such that $f^{k}(R_{f}(\theta))\subset R_{f}(\theta)$, or equivalently $d^{q}\theta\equiv \theta\pmod{1}$. We say that $R_{f}(\theta)$  lands at some point if $\overline{R_{f}(\theta)}\backslash R_{f}(\theta)$ is a singleton. The famous landing theorem by Douady and Hubbard 
(see \cite[Chpt 8, Prop 8.4]{douhub1984}) states that for a polynomial of degree $d\geq 2$, if $\mathcal{K}(f)$ is connected, then every periodic dynamic ray lands at a periodic point, which is either repelling or parabolic.

In transcendental entire dynamics,  the point at infinity is not suitable for constructing rays in the same way as for polynomials. However,  for a large class of transcendental maps, there exist curves in the Julia set, consisting of points escaping to $\infty$ under forward iterates. This structure is considered to be analogous to the dynamic rays in polynomial dynamics. Because of the similarity, those curves are also called dynamic rays, or sometimes hairs. Many studies have been carried out to prove existence theorems for different classes of transcendental entire functions. Devaney and Krych in 1984 discovered that for exponential functions $z\mapsto \lambda e^z$ for $\lambda\in(0,1/e)$, the escaping set consists of curves tending to $\infty$ (\cite{devkry1984}).  This study was generalized by Devaney et. al. to exponential maps satisfying certain conditions (\cite{devgoldhub1999}), and was finally completed for arbitrary maps in this family in \cite{schzim2003a}.

The investigation of dynamic rays in larger classes of transcendental maps started in 1986 by Devaney and Tangerman  (\cite{devtang1986}). They observed the ray structure in classes of transcendental maps having some similarities with the exponential family. Later, Baranski  in \cite{bar2007}, proved the existence of dynamics rays for hyperbolic  maps of finite order which have one completely invariant Fatou component. A recent work \cite{rrrs2011} by Rottenfusser, R{\"u}ckert, Rempe and Schleicher  covers all the previous results as well as gives a proof of existence of rays in a more general class of maps. According to their work, dynamic rays exist for a class of transcendental entire maps which include those of finite order with bounded singular set, or finite composition of such maps. In the present paper, we work in this class, and denote it by $\widehat{\mathcal{B}}$.

Dynamic rays in transcendental dynamics are organized according to some symbolic dynamics. We do not consider this analysis here. For details about the existence and the uniqueness see \cite{rrrs2011}, or \cite{fagben2011}. We denote a dynamic ray by a continuous parametrized unbounded curve
\begin{eqnarray*}
g:(0,\infty)&\rightarrow&\mathbb{C}\\
t&\mapsto& g(t),
\end{eqnarray*}
and we call the parameter $t$, the \textit{potential}. In order to associate the potential to the dynamics, a \textit{ray model dynamics} $F:[0,\infty)\rightarrow[0,\infty)$ is used, which is a continuous, unbounded  and strictly increasing function, and which respects the dynamics, i.e.
$f(g(t))=\tilde{g}(F(t))$ for some ray $\tilde{g}$ and for all $t>0$.  A dynamic ray $g$ is $k$-periodic if $f^k(g)\subset g$. In particular, for a $k$-periodic dynamic ray $g$
\begin{equation*}
f^k(g(t))=g(F^k(t))
\end{equation*}
is satisfied. A \textit{fundamental segment} of a $k$-periodic dynamic ray is any ray segment with endpoints $g(t)$ and $g(F^k(t))$ for $t\in(0,\infty)$, and we denote it by $g[t,F^k(t)]$.

We say that $g$ lands if $\displaystyle{\lim_{t\rightarrow 0}g(t)}$ exists. As for polynomials, understanding the landing behaviors of dynamic rays is essential to understand the topology of the Julia set. Naturally, the study of landing behaviors of dynamic rays of transcendental maps  started with the exponential dynamics. Schleicher and Zimmer showed that in the exponential family,  every periodic dynamic ray lands if the singular orbit is bounded  (\cite{schzim2003a}). In \cite{rem2006}, Rempe showed that if the singular orbit of an exponential map does not escape to $\infty$, all periodic dynamic rays land.

In this article, we aim to prove a landing theorem for periodic dynamic rays for a general class of transcendental entire maps inside class  $\widehat{\mathcal{B}}$, where the dynamic rays are well-defined. In the proof, we use hyperbolic geometry, which is the  classical tool to deal with the landing properties of periodic dynamic rays. For a given entire map $f$, we define the \textit{set of singular values of $f$}, denoted by $\mathcal{S}:=\mathcal{S}(f)$, as the union of its \textit{critical values} ($f( c)$ where $f'( c)=0$),  its \textit{asymptotic values}  (i.e. values $a\in\mathbb{C}$ for which there exists a curve tending to infinity whose image lands at $a$), and their accumulation points. Define the post-singular set by
\begin{equation*}
\mathcal{P}=\overline{\cup_{n\geq 0} f^n(\mathcal{S})}.
\end{equation*}
We now state our main result.
\begin{thmmain}\label{landingtheoremforexternaldynamicrays} For $f\in\widehat{\mathcal{B}}$ with bounded post-singular set $\mathcal{P}$, all periodic dynamic rays land, and the landing points are either repelling or parabolic periodic points.
\end{thmmain}

The result is not new. It is implied by \cite[Thm B.1, Cor B.4]{rem2008}, where  the proof also uses hyperbolic geometry (see also \cite[Cor 2.19]{hel2009}). Our proof mainly differs by our approach to dealing with the following two problems: First, a priori, a ray may turn back to $\infty$. In \cite{rem2008}, the possibility of having the point at $\infty$ in the accumulation set of a ray when $t\rightarrow 0$ is excluded by using so called "extendibility" condition given by \cite[Defn 3.1, Observation 3.2]{rem2008} (see \cite[Thm B.2]{rem2008}). In our proof we rule out this possibility without examining extendibility condition, but with a more direct approach using hyperbolic geometry results. Second, if $f$ is not an exponential map, rays may be nonrectifiable. We treat this situation alternatively, by replacing the hyperbolic length of the fundamental segment with the hyperbolic length of another arc which has some particular properties.

In the proof, we are going to perform successive pull-backs of fundamental segments on a periodic ray, and we shall see that in the limit, these pullbacks degenerate to a single point. We would like to emphasize that working with maps with bounded post-singular sets has obvious advantages: Since the point on the rays escape to infinity under iteration, the postsingular set cannot intersect, or accumulate on the set of rays. This allows us to pull back a fundamental segment all the way along a periodic ray. Furthermore, having a bounded post-singular set provides useful hyperbolic geometry properties to study the dynamics in a neighborhood of $\infty$.

The idea of the proof of the Main Theorem is as follows: Suppose $g:(0,\infty)\rightarrow\mathbb{C}$ is a $k$-periodic dynamic ray. Let $W$ denote the accumulation set of $g$ as $t\rightarrow 0$. The goal is to show that $W$ is a singleton in the complex plane. Let $w$ be an arbitrary point in $W$, and $\{t_n\}_n$ be a sequence of positive numbers converging to $0$, such that $\lim_{n\rightarrow \infty}g(t_n)=w$. We show that the limit point $w$ is finite, and is a periodic point, i.e., $f^k(w)=w$. This holds for any accumulation point in $W$, which means $W$ is a discrete set. On the other hand, $W$ is connected, being an accumulation set of a connected set. Hence $W=\{w\}$, as we wanted to show. Suppose $U_0$ is the unbounded connected component of $\mathbb{C}\backslash\mathcal{P}$, and hence  $g\subset U_0$. We divide the proof into two cases according to whether the accumulation point $w$ is in $U_0$, or in $\partial U_0$. The possibility of having the point at infinity  in the accumulation set is ruled out under the case when $w\in\partial U_0$.
\subsubsection*{Acknowledgements} I would like to thank Carsten Lunde Petersen, for guiding throughout this work,  and N\'uria Fagella for many useful comments. This work was supported by Marie Curie RTN 035651-CODY and Roskilde University.
\section{Preliminaries on hyperbolic geometry}\label{sectionhyp}
This section contains the basic notions and results of hyperbolic geometry, which are the main tools in the proof of the Main Theorem. Proofs of known results can be found in, for example \cite{bearmin2000}, while the proof of results that are new or could not be found in the literature are included here.

The \textit{\textit{hyperbolic metric (Poincar\'{e} metric)}} in the unit disk $\mathbb{D}$ is defined by
\begin{equation*}
\lambda_{\mathbb{D}}(z)|dz|=\frac{2|dz|}{1-|z|^2},
\end{equation*}
which has constant curvature $-1$, where $\lambda_{\mathbb{D}}$ is called the \textit{hyperbolic density function}. This metric is invariant under all Mobius transformations $z\mapsto e^{i\theta}\frac{z-a}{1-\overline{a}z}$, $\theta\in\mathbb{R}$, $a\in\mathbb{D}$. More precisely, for any given Mobius transformation $M:\mathbb{D}\rightarrow\mathbb{D}$ and for all $z\in\mathbb{D}$,
\begin{equation*}
 \lambda_{\mathbb{D}}(f(z))|f'(z)|=\lambda_{\mathbb{D}}(z)
 \end{equation*}
 is satisfied. In other words, all Mobius transformations  are \textit{isometries} with respect to the hyperbolic metric $\lambda_{\mathbb{D}}(z)|dz|$.

By the Uniformization Theorem, we have the following:
\begin{thm}\label{thmhyperbolicmetric}
Every domain $U$ of the Riemann sphere $\widehat{\mathbb{C}}:=\mathbb{C}\cup\{\infty\}$ with at least three boundary points admits a hyperbolic metric. More precisely, there exists a unique hyperbolic metric $\lambda_{U}(w)|dw|$ on $U$ such that for any universal covering $f:\mathbb{D}\rightarrow U$, and for all $w=f(z)\in U$,
\begin{equation*}
\lambda_{\mathbb{D}}(z)=\lambda_{U}(f(z))|f'(z)|.
\end{equation*}
The metric $\lambda_{U}(w)|dw|$ is real analytic with curvature $-1$.
\end{thm}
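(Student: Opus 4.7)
The plan is to invoke the Uniformization Theorem directly. Since $U\subset\widehat{\mathbb{C}}$ has at least three boundary points, its universal cover is (conformally equivalent to) the unit disk; hence there exists a holomorphic universal covering $f:\mathbb{D}\to U$. I would then define $\lambda_U$ by \emph{pushing forward} the hyperbolic density of $\mathbb{D}$: for $w\in U$, choose any preimage $z\in f^{-1}(w)$ and set
\begin{equation*}
\lambda_U(w)=\frac{\lambda_{\mathbb{D}}(z)}{|f'(z)|}.
\end{equation*}
Note that $f'(z)\neq 0$ since $f$ is a covering map and hence a local biholomorphism.

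The main point to verify is that this definition does not depend on the choice of preimage $z$. For this I would use the fact that any two preimages $z_1,z_2$ of the same $w$ are related by a deck transformation $\gamma$ of the covering $f:\mathbb{D}\to U$, and that deck transformations of a disk cover are conformal automorphisms of $\mathbb{D}$, i.e.\ Möbius transformations of the disk. Using the isometry property $\lambda_{\mathbb{D}}(\gamma(z_1))|\gamma'(z_1)|=\lambda_{\mathbb{D}}(z_1)$ together with the chain rule $f'(z_1)=f'(\gamma(z_1))\,\gamma'(z_1)$ (obtained from $f\circ\gamma=f$), the two candidate values at $z_1$ and $z_2=\gamma(z_1)$ coincide. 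I regard this step as the principal (though routine) obstacle: it is exactly the place where the hypothesis that the universal cover is a disk, and the Möbius-invariance of $\lambda_{\mathbb{D}}$ recalled just before the theorem, are used together.

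Independence of the choice of universal covering follows by the same calculation, since any two universal coverings $f_1,f_2:\mathbb{D}\to U$ differ by precomposition with a Möbius automorphism of $\mathbb{D}$. Uniqueness of $\lambda_U$ is then immediate: any metric satisfying the stated identity with respect to a fixed covering is forced to equal the $\lambda_U$ just constructed at every point of $U$.

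Finally, I would deduce the regularity and curvature assertions from the construction. Since $f$ is holomorphic and locally invertible, the formula $\lambda_U(w)=\lambda_{\mathbb{D}}(f^{-1}(w))/|f'(f^{-1}(w))|$ expresses $\lambda_U$ locally as a real-analytic function, because $\lambda_{\mathbb{D}}(z)=2/(1-|z|^2)$ is real-analytic and $f^{-1}$ is holomorphic on small disks. The curvature statement then follows from the conformal transformation law for Gaussian curvature: holomorphic pullback preserves the curvature of a conformal metric, so $\lambda_U$ inherits curvature $-1$ from $\lambda_{\mathbb{D}}$. This completes the outline.
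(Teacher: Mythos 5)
Your proposal is correct and is the standard construction of the hyperbolic metric on a hyperbolic domain from the Uniformization Theorem, which is precisely how the paper treats this result (stating it as a consequence of Uniformization without proof, referring to \cite{bearmin2000}). The push-forward definition, well-definedness via deck transformations being M\"obius automorphisms of $\mathbb{D}$ together with the chain rule from $f\circ\gamma=f$, independence of the choice of covering, uniqueness, real-analyticity from the local holomorphic inverse, and preservation of curvature under holomorphic pullback are all exactly the expected steps and check out.
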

Here $\lambda_{U}$ is the hyperbolic density function in the hyperbolic metric defined in $U$. The metric on $U$ induces a hyperbolic distance $d_{U}$ between two points $z$, $w$ in $U$ in the following way: the hyperbolic length of an arc $\gamma\in U$ joining $z$ and $w$ in $U$ is defined by
\begin{equation*}
l_{\gamma}=\int_{\gamma}\lambda_{U}(z)|dz|.
\end{equation*}
 Then the distance is equal to
 \begin{equation*}
  d_{U}(z,w)=\inf_{\gamma\in U} l_{\gamma}.
\end{equation*}
Holomorphic maps have the property that they do not increase the hyperbolic metric. We state this result below.
\begin{thm}(Schwarz-Pick Lemma)
Let $U$ and $V$ be hyperbolic domains  and suppose $f:V\rightarrow U$ is a holomorphic map. Then one of the following is satisfied:
\begin{itemize}
\item[i.] $f$ is a covering map. Hence it is a local hyperbolic isometry with respect to the two metrics, i.e.,
\begin{equation*}
\lambda_{V}(z)=|f'(z)|\lambda_{U}(f(z)). 
\end{equation*}
In this case, the hyperbolic arc lengths are preserved for the respective hyperbolic metrics. 
 
\item[ii.] $f$ decreases the hyperbolic metric, i.e.,

\begin{equation*}
|f'(z)|\lambda_{U}(f(z))<\lambda_{V}(z).
\end{equation*}
\end{itemize}

\end{thm}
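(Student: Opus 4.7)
The plan is to reduce the statement to the classical Schwarz--Pick Lemma on the unit disk $\mathbb{D}$ by lifting to universal covers. By Theorem \ref{thmhyperbolicmetric} I take universal covering maps $\pi_V:\mathbb{D}\to V$ and $\pi_U:\mathbb{D}\to U$, each of which is a local hyperbolic isometry from the Poincar\'e metric on $\mathbb{D}$ to the hyperbolic metric of its image. Because $\mathbb{D}$ is simply connected, standard covering-space theory lets me lift $f\circ\pi_V$ through $\pi_U$ to a holomorphic map $\tilde{f}:\mathbb{D}\to\mathbb{D}$ satisfying $\pi_U\circ\tilde{f}=f\circ\pi_V$.

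Next, I would apply the classical Schwarz--Pick dichotomy to $\tilde{f}$, which follows from the ordinary Schwarz Lemma after pre- and post-composing by disk automorphisms: either $\tilde{f}$ is a M\"obius automorphism of $\mathbb{D}$ and satisfies $|\tilde{f}'(\zeta)|\lambda_{\mathbb{D}}(\tilde{f}(\zeta))=\lambda_{\mathbb{D}}(\zeta)$ at every $\zeta\in\mathbb{D}$, or the strict inequality $|\tilde{f}'(\zeta)|\lambda_{\mathbb{D}}(\tilde{f}(\zeta))<\lambda_{\mathbb{D}}(\zeta)$ holds for every $\zeta\in\mathbb{D}$. The global nature of this dichotomy is the content of the equality case of the Schwarz Lemma: equality at a single interior point forces the normalized map to be a rotation, which then propagates everywhere.

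Third, I would transport the conclusion from $\mathbb{D}$ back to $V$ by combining the two isometry identities $|\pi_V'(\zeta)|\lambda_V(\pi_V(\zeta))=\lambda_{\mathbb{D}}(\zeta)$ and $|\pi_U'(\tilde{f}(\zeta))|\lambda_U(\pi_U(\tilde{f}(\zeta)))=\lambda_{\mathbb{D}}(\tilde{f}(\zeta))$ with the chain rule for $f\circ\pi_V=\pi_U\circ\tilde{f}$. A short manipulation yields
\begin{equation*}
|f'(w)|\lambda_U(f(w))=\frac{|\tilde{f}'(\zeta)|\lambda_{\mathbb{D}}(\tilde{f}(\zeta))}{\lambda_{\mathbb{D}}(\zeta)}\,\lambda_V(w),\qquad w=\pi_V(\zeta),
\end{equation*}
so the equality/strict-contraction dichotomy for $\tilde{f}$ on $\mathbb{D}$ transfers verbatim to the analogous dichotomy for $f$ on $V$.

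The main obstacle, and the step that needs real care, is the geometric upgrade in the equality case: proving that when $\tilde{f}$ is a M\"obius automorphism of $\mathbb{D}$, the map $f$ itself is a covering of $U$, rather than merely a local isometry. For this I would verify that $\tilde{f}$ intertwines the deck-transformation groups of $\pi_V$ and $\pi_U$: for every deck transformation $\sigma$ of $\pi_V$, the composition $\tilde{f}\circ\sigma$ is again a lift of $f\circ\pi_V$ through $\pi_U$, so by uniqueness of lifts there is a deck transformation $\tau$ of $\pi_U$ with $\tilde{f}\circ\sigma=\tau\circ\tilde{f}$. Combined with the fact that $\pi_V,\pi_U$ are covering projections and $\tilde{f}$ is a global biholomorphism of $\mathbb{D}$, this intertwining produces an evenly covered neighborhood of every point of $U$ under $f$, which is precisely what is required for $f$ to be a covering map.
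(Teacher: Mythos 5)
The paper does not supply a proof of this theorem; it is cited as a classical result (the paper explicitly directs the reader to Beardon--Minda for proofs of known results). So there is no internal argument to compare against, and your proposal should be judged on its own merits.

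Your proof is correct and is essentially the standard textbook derivation: lift to the universal cover $\mathbb{D}$, invoke the classical Schwarz--Pick dichotomy for self-maps of the disk, and push the conclusion down through the two local isometries $\pi_V$, $\pi_U$. The computation transferring the contraction ratio is right, and the observation that the dichotomy is \emph{global} (equality at one point of $\mathbb{D}$ forces equality everywhere, by the equality case of Schwarz) is exactly what makes the statement clean.

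The only place that would benefit from a little more detail is the final step. You correctly note that each deck transformation $\sigma$ of $\pi_V$ satisfies $\tilde f\circ\sigma=\tau\circ\tilde f$ for some deck transformation $\tau$ of $\pi_U$ (by uniqueness of lifts on a connected domain), so the biholomorphism $\tilde f$ conjugates the deck group $\Gamma_V$ of $\pi_V$ into the deck group $\Gamma_U$ of $\pi_U$. The cleanest way to finish is to say explicitly: replacing $\pi_V$ by $\pi_V\circ\tilde f^{-1}$, one may assume $\tilde f=\operatorname{id}$, so that $\Gamma_V\le\Gamma_U$ as subgroups of $\operatorname{Aut}(\mathbb{D})$, and $f$ becomes the natural projection $\mathbb{D}/\Gamma_V\to\mathbb{D}/\Gamma_U$, which is a covering map whenever one Fuchsian group is contained in another acting freely and properly discontinuously. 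Your phrase ``produces an evenly covered neighborhood'' is the right idea but glosses over that step. Also, since $\tilde f$ is surjective, $f\circ\pi_V=\pi_U\circ\tilde f$ is surjective onto $U$, so surjectivity of $f$ (needed for ``covering map'') is automatic and worth stating. An alternative closing that avoids the group-theoretic bookkeeping entirely: in the equality case, $f$ is a local isometry of complete Riemannian surfaces, and a surjective local isometry between complete manifolds is automatically a covering map.
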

For a given pair of hyperbolic domains $U$ and $V$ such that $V\subset U$, assigning the identity map 
\begin{equation*}
Id:V\rightarrow U,
\end{equation*}
Schwarz-Pick Lemma gives the Comparison Principle, which we state below.
\begin{thm} (Comparison Principle)
Let $U$ and $V$ be two hyperbolic domains such that  $V\subset U$. Then $\lambda_{V}(z)\geq\lambda_{U}(z)$ for all $z\in V$. 
\end{thm}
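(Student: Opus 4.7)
The plan is to derive the Comparison Principle directly from the Schwarz–Pick Lemma, as suggested by the sentence preceding the statement. The natural map to feed into Schwarz–Pick is the inclusion $\mathrm{Id}:V\hookrightarrow U$. Since $V\subset U$ and both are hyperbolic domains, this inclusion is a well-defined holomorphic map between hyperbolic domains with derivative identically equal to $1$. Applying Schwarz–Pick to it yields, pointwise at every $z\in V$, either
\begin{equation*}
\lambda_{V}(z)=|\mathrm{Id}'(z)|\,\lambda_{U}(\mathrm{Id}(z))=\lambda_{U}(z)
\end{equation*}
in the covering case, or $\lambda_{V}(z)>\lambda_{U}(z)$ in the strictly contracting case. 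In both cases the desired inequality $\lambda_{V}(z)\geq\lambda_{U}(z)$ holds.

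To make the dichotomy cleaner, I would also point out that the covering alternative essentially collapses to the trivial case $V=U$: a covering map which is injective (as the identity obviously is) must be a homeomorphism, so if $V\subsetneq U$ the identity cannot be a covering, and one is automatically in the strict alternative of Schwarz–Pick, yielding the strict inequality $\lambda_{V}(z)>\lambda_{U}(z)$ for every $z\in V$. This observation is not needed for the statement as written, but it clarifies why the inequality is strict whenever the inclusion is proper.

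There is no real obstacle here: the argument is essentially a one-line consequence of Schwarz–Pick applied to the identity. The only thing to be careful about is ensuring the hypotheses of Schwarz–Pick are met, namely that $V$ itself is hyperbolic; this is assumed in the statement, and it is automatic in the settings the paper will apply the Principle to, since $V$ is then contained in the hyperbolic domain $U$ and hence omits at least three points of $\widehat{\mathbb{C}}$.
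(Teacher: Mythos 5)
Your proof is correct and is exactly the argument the paper intends: the paper itself remarks, just before the statement, that the Comparison Principle follows by applying Schwarz--Pick to the inclusion $\mathrm{Id}:V\to U$, and you simply spell that out. Your added observation that the covering alternative forces $V=U$ (so the inequality is strict whenever $V\subsetneq U$) is a correct and harmless refinement.
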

Not for all hyperbolic domains,  hyperbolic density functions can be calculated. Instead, one can use estimates, using the Comparison Principle. With this idea, for hyperbolic domains $U$ and $V$, such that $V\subset U\subsetneq\mathbb{C}$, we can find an upper bound for the ratio $\frac{\lambda_U}{\lambda_V}$, locally, as we show in the following.
\begin{lem}\label{contractiongeneral} Let  $V, U\subsetneq\mathbb{C}$ be hyperbolic domains such that $V\subset U$. Set $d:=d_U(z,\partial V)$. Then, for all $z\in V$,
\begin{equation*}
0<\frac{\lambda_U(z)}{\lambda_V(z)} \leq \kappa(d),\;\;\;\;\mathrm{where}
\end{equation*}
\begin{equation}\label{contractionestimate}
\kappa(d):=-\frac{e^{2d}-1}{2 e^{d}}\log(\frac{e^d-1}{e^d+1}),\;\;\mathrm{with}\;\;\kappa(0)=0.
\end{equation}

\end{lem}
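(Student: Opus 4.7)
The plan is to reduce the estimate to a model situation---a once-punctured disk---where the density can be computed explicitly. Fix $z\in V$ and, for $\varepsilon>0$, choose $p\in\partial V\cap U$ with $d_U(z,p)\le d+\varepsilon$; if no such $p$ exists then $V=U$ and the inequality is trivial. Set $U_p:=U\setminus\{p\}$, a hyperbolic domain with $V\subset U_p\subset U$. The Comparison Principle applied to both inclusions gives $\lambda_V(z)\ge\lambda_{U_p}(z)>0$ and $\lambda_U(z)/\lambda_V(z)\le\lambda_U(z)/\lambda_{U_p}(z)$, so it suffices to bound the latter ratio. Positivity of $\lambda_U(z)/\lambda_V(z)$ is automatic since hyperbolic densities are strictly positive.

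Next I lift to the disk. Let $\phi:\mathbb{D}\to U$ be a universal covering with $\phi(0)=z$, and pick $\tilde p\in\phi^{-1}(p)$ minimizing the distance to $0$, so that $d_{\mathbb{D}}(0,\tilde p)=d_U(z,p)=:d'$. Because $\phi$ restricts to a covering $\mathbb{D}\setminus\phi^{-1}(p)\to U_p$, Theorem~\ref{thmhyperbolicmetric} gives
\[
\frac{\lambda_U(z)}{\lambda_{U_p}(z)}=\frac{\lambda_{\mathbb{D}}(0)}{\lambda_{\mathbb{D}\setminus\phi^{-1}(p)}(0)}=\frac{2}{\lambda_{\mathbb{D}\setminus\phi^{-1}(p)}(0)}.
\]
A second application of the Comparison Principle, now to $\mathbb{D}\setminus\phi^{-1}(p)\subset\mathbb{D}\setminus\{\tilde p\}$, yields $\lambda_{\mathbb{D}\setminus\phi^{-1}(p)}(0)\ge\lambda_{\mathbb{D}\setminus\{\tilde p\}}(0)$, replacing the whole orbit of $p$ by a single puncture.

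Finally, compute $\lambda_{\mathbb{D}\setminus\{\tilde p\}}(0)$ explicitly. A M\"obius automorphism $M$ of $\mathbb{D}$ sending $\tilde p$ to $0$ is a hyperbolic isometry $\mathbb{D}\setminus\{\tilde p\}\to\mathbb{D}^*$. Writing $r:=|\tilde p|=\tanh(d'/2)=(e^{d'}-1)/(e^{d'}+1)$ one has $|M'(0)|=1-r^2$, and the classical formula $\lambda_{\mathbb{D}^*}(w)=(|w|\log(1/|w|))^{-1}$ yields $\lambda_{\mathbb{D}\setminus\{\tilde p\}}(0)=(1-r^2)/(r\log(1/r))$. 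Using $2r/(1-r^2)=\sinh(d')$ and $-\log r=-\log\tanh(d'/2)$ this simplifies to $\lambda_U(z)/\lambda_{U_p}(z)\le\kappa(d')$. Since $d\le d'\le d+\varepsilon$ and $\kappa$ extends continuously to $[0,\infty)$ with $\kappa(0)=0$, letting $\varepsilon\to 0$ gives $\lambda_U(z)/\lambda_V(z)\le\kappa(d)$.

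The subtlety to watch for is that $\phi^{-1}(p)$ is generally a discrete orbit rather than a single point, so there is no direct conformal equivalence between the covering picture in $\mathbb{D}$ and the once-punctured disk. The argument circumvents this by applying the Comparison Principle twice: first to pass from $V$ to $U_p$ inside $U$, and then, after lifting, from $\mathbb{D}\setminus\phi^{-1}(p)$ to $\mathbb{D}\setminus\{\tilde p\}$. Getting this double reduction right---so that the distance that appears in the final estimate is precisely $d_U(z,\partial V)$ and not some larger lifted distance---is the main technical point, and it is what forces the particular form of $\kappa(d)$.
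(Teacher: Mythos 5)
Your proof is correct and follows essentially the same route as the paper: lift to the unit disk via a universal covering based at $z$, reduce to a once-punctured disk containing $0$, apply the explicit formula for $\lambda_{\mathbb{D}^*}$ together with a M\"obius normalization, and rewrite in terms of the hyperbolic distance. The only structural difference is cosmetic---you first pass from $V$ to the intermediate domain $U\setminus\{p\}$ and then lift, whereas the paper lifts $V$ directly to a component $\widehat V\subset\mathbb{D}$ and then drops all but one boundary point of $\widehat V$---and your handling of the distance via $d\le d'\le d+\varepsilon$ with $\varepsilon\to 0$ is, if anything, a cleaner way to arrive at the same bound $\kappa(d)$.
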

\begin{proof} Let $z\in V$ be an arbitrary point, and let $\pi:\mathbb{D}\rightarrow U$ be a universal covering, such that $\pi(0)=z$. Let $\widehat{V}$ be the connected component of $\pi^{-1}(V)$ containing $0$. The map $\pi:\mathbb{D}\rightarrow U$ and the restriction $\pi|_{\widehat{V}}:\widehat{V}\rightarrow V$ are local hyperbolic isometries, that is,
\begin{eqnarray*}
\lambda_{\mathbb{D}}(0)=|\pi'(0)|\lambda_U(z),\\
\lambda_{\widehat{V}}(0)=|\pi'(0)|\lambda_V(z),
\end{eqnarray*}
and hence
\begin{equation*}
\frac{\lambda_{\mathbb{D}}(0)}{\lambda_{\widehat{V}}(0)}=\frac{\lambda_U(z)}{\lambda_V(z)}.
\end{equation*}
Moreover, $d_{\mathbb{D}}(0,\partial\widehat{V})=d_U(z,\partial V)$.
For $x\in\mathbb{D}\backslash \widehat{V}$, set $V_x=\mathbb{D}\backslash\{x\}$. Since by the Comparison Principle $\widehat{V}\subset V_{x}$, $\lambda_{V_x}(z)\leq \lambda_{\widehat{V}}(z)$ for all $z\in\widehat{V}$, and hence 
\begin{equation}\label{righthandsideoftheinequality}
\frac{\lambda_{\mathbb{D}}(0)}{\lambda_{\widehat{V}}(0)}\leq\frac{\lambda_{\mathbb{D}}(0)}{\lambda_{V_x}(0)}.
\end{equation}
The M\"{o}bius transformation $M:\mathbb{D}\rightarrow\mathbb{D}$, $\displaystyle{M(w)=\frac{w-x}{1-\overline{x}w}}$ is a hyperbolic isometry with $M(0)=-x$, so that we have $\lambda_{\mathbb{D}}(0)=\lambda_{\mathbb{D}}(-x)|M'(0)|$, and $\lambda_{V_x}(0)=\lambda_{\mathbb{D}^*}(-x)|M'(0)|$. Then
\begin{equation}\label{kappa}
\frac{\lambda_{\mathbb{D}}(0)}{\lambda_{\widehat{V}}(0)}\leq\frac{\lambda_{\mathbb{D}}(0)}{\lambda_{V_x}(0)}=\frac{\lambda_{\mathbb{D}}(-x)}{\lambda_{\mathbb{D}^*}(-x)}=-\frac{2|x| \log| x|}{1-|x|^2}.
\end{equation}
(Here $\lambda_{\mathbb{D}^*}(z)=-\frac{1}{|z|\log|z|}$. One can obtain this using Theorem \ref{thmhyperbolicmetric}). Note that $-\frac{2 |x|\log |x|}{1-|x|^2}$ is increasing for $|x|\in(0,1)$.

We have the relation
\begin{equation*}
d:=d_U(z,\partial V)\leq d_{\mathbb{D}}(0,-x)=\log\frac{1+|x|}{1-|x|},
\end{equation*}
or equivalently
\begin{equation}\label{modx}
|x|\leq \frac{e^d-1}{e^d+1},
\end{equation}
since $\log\frac{1+|x|}{1-|x|}$ is increasing in $|x|\in(0,1)$. Using (\ref{modx}) in (\ref{kappa}), we obtain the desired result.
\end{proof}
\begin{lem}\label{lem3}
Let $U\subset \widehat{\mathbb{C}}$ be a hyperbolic domain. Suppose that $\{x_n\}_n$ and $\{y_n\}_n$ are sequences in $U$ such that $d_U(x_n,y_n)\leq \kappa<\infty$. If $x_n\rightarrow x\in\partial U$, then $y_n\rightarrow x$. 
\end{lem}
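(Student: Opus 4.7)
The plan is to argue by contradiction. Suppose $y_n\not\to x$; extracting and relabelling, by compactness of $\widehat{\mathbb{C}}$ assume $y_n\to y^\ast$ spherically with $y^\ast\neq x$. I will rule out the possibilities $y^\ast\in U$ and $y^\ast\in\partial U$ separately.

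The case $y^\ast\in U$ is straightforward. Then $\lambda_U$ is bounded away from $0$ and $\infty$ on a spherical neighborhood of $y^\ast$, so the hyperbolic and spherical topologies coincide there, giving $d_U(y_n,y^\ast)\to 0$. On the other hand, completeness of the hyperbolic metric forces $d_U(x_n,y^\ast)\to\infty$: otherwise $\{x_n\}$ would lie in a relatively compact hyperbolic ball in $U$, so some subsequential spherical limit would be an interior point, contradicting $x_n\to x\in\partial U$. The triangle inequality now contradicts $d_U(x_n,y_n)\leq\kappa$.

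The main obstacle is the case $y^\ast\in\partial U$ with $y^\ast\neq x$, where both sequences tend to boundary and completeness alone is insufficient; one must quantify the blow-up of $\lambda_U$ at the specific point $x$. Applying a Möbius transformation of $\widehat{\mathbb{C}}$ (which is a spherical isometry and preserves the hyperbolic structure of the image domain) I may assume $x,y^\ast\in\mathbb{C}$. Since $U$ is hyperbolic, $\partial U$ contains at least three points, so I pick $a\in\partial U\setminus\{x,y^\ast\}$. Then $U\subset W:=\widehat{\mathbb{C}}\setminus\{x,y^\ast,a\}$, and by the Comparison Principle $\lambda_U\geq\lambda_W$ on $U$. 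The thrice-punctured sphere $W$ satisfies the classical puncture asymptotic $\lambda_W(z)\geq c/\bigl(|z-x|\log(1/|z-x|)\bigr)$ for $z$ near $x$, obtainable from the universal cover via the modular function (or directly from the half-infinite cylinder model near the puncture).

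To close, fix a Euclidean disk $B(x,r)$ with $r<1$ disjoint from $\{y^\ast,a\}$. For large $n$, $x_n\in B(x,r/2)$ and $y_n\notin B(x,r)$, so any path in $U$ joining them contains a subarc $\gamma_n\subset U$ from $x_n$ to $\partial B(x,r)$. Writing $\rho_n(s)=|\gamma_n(s)-x|$ and using $|\rho_n'(s)|\leq|\gamma_n'(s)|$ together with the density estimate above, the hyperbolic length bounds as
\[
\ell_U(\gamma_n)\;\geq\; c\int\frac{|\rho_n'(s)|}{\rho_n(s)\log(1/\rho_n(s))}\,ds\;\geq\; c\bigl(\log\log(1/|x_n-x|)-\log\log(1/r)\bigr)\;\to\;\infty,
\]
contradicting $d_U(x_n,y_n)\leq\kappa$. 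Both cases being excluded, $y_n\to x$.
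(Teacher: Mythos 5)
Your proof is correct. The paper does not actually prove this lemma; it cites Milnor's text for the general Riemann-surface version, so your argument is a genuine self-contained alternative rather than a paraphrase of the paper's route. The structure is sound: after extracting a subsequential spherical limit $y^\ast\neq x$, the case $y^\ast\in U$ is excluded by completeness (closed hyperbolic balls in $U$ are compact, so boundedness of $d_U(x_n,y^\ast)$ would force an interior subsequential limit for $x_n$), and the case $y^\ast\in\partial U$ is excluded by comparing $\lambda_U$ with the density of the thrice-punctured sphere $W=\widehat{\mathbb{C}}\setminus\{x,y^\ast,a\}\supset U$, using the classical puncture asymptotic $\lambda_W(z)\gtrsim 1/\bigl(|z-x|\log(1/|z-x|)\bigr)$ together with $|\rho_n'|\leq|\gamma_n'|$ to reduce the length integral to a one-variable, divergent $\log\log$ bound. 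One point you assert tacitly deserves a sentence: that $\partial U$ itself has at least three points, so a third puncture $a$ exists --- the definition of hyperbolic only says the \emph{complement} of $U$ has three points. It is nonetheless true: if $\partial U$ had at most two points, then $U$ and $\widehat{\mathbb{C}}\setminus\overline{U}$ would be disjoint open sets whose union is the connected set $\widehat{\mathbb{C}}\setminus\partial U$, forcing $\widehat{\mathbb{C}}\setminus\overline{U}=\emptyset$, i.e.\ $U=\widehat{\mathbb{C}}\setminus\partial U$ would be the sphere minus at most two points and hence not hyperbolic. Compared with the paper's outsourcing to Milnor, your puncture-comparison route produces an explicit quantitative blow-up of $d_U(x_n,\cdot)$ and is stylistically in line with the paper's Lemma~\ref{contractiongeneral}, which likewise proceeds by comparison against a once-punctured disk and the $\mathbb{D}^\ast$ density formula.
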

For the general proof of Lemma \ref{lem3} for Riemann surfaces, see for example \cite[Thm 3.4]{mil2006}.

The lemma below is going to be used in the proof of Proposition \ref{mainprop}, in order to simplify the domain we work on.  
\begin{lem}\label{lem_covering}
Let $U\subseteq\widehat{\mathbb{C}}$ be a hyperbolic domain with an isolated boundary point $z_0$, and let $\widehat{U}:=U\cup\{z_0\}$ be a domain. Then there exists a holomorphic map $\pi_*:\mathbb{D}\rightarrow \widehat{U}$, with $\pi_*(0)=z_0$, $\pi_*'(0)\neq 0$, and  for $\mathbb{D}^*:=\mathbb{D}\backslash\{0\}$, the restriction $\pi_{*}|_{\mathbb{D}^*}:\mathbb{D}^*\rightarrow U$  is a covering map of degree $1$ in a neighborhood of  $0$.
\end{lem}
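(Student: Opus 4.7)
The plan is to realize $\pi_*$ as an intermediate covering between the universal cover of $U$ and $U$ itself, corresponding to the cyclic subgroup of $\pi_1(U)$ generated by a small loop around the isolated boundary point $z_0$, and then to extend this covering holomorphically across the puncture.

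First, I would uniformize $U=\mathbb{D}/\Gamma$ via the universal covering $\pi:\mathbb{D}\to U$, where $\Gamma\subset\mathrm{Aut}(\mathbb{D})$ is a torsion-free Fuchsian group. Since $z_0$ is isolated in $\partial U$ and $\widehat U$ is a domain, a small round disk $V$ around $z_0$ (in a local chart if $z_0=\infty$) lies inside $\widehat U$, and $V^*:=V\setminus\{z_0\}\subset U$ is a punctured disk. A small generator $\gamma$ of $\pi_1(V^*)\cong\mathbb{Z}$ represents a nontrivial element $T_0\in\pi_1(U)\cong\Gamma$ (nontrivial because $\gamma$ has non-zero winding number around $z_0\notin U$). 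Using the standard dictionary between isolated boundary points of a hyperbolic Riemann surface and cusps of the uniformizing Fuchsian group, I conclude that $T_0$ is parabolic, with fixed point $\xi\in\partial\mathbb{D}$ and stabilizer $\Gamma_\xi=\langle T_0\rangle$.

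Second, I would form the intermediate cover $\mathbb{D}/\Gamma_\xi\to U$. In the upper half-plane model $\mathbb{H}$ with $\xi=\infty$ and $T_0(z)=z+1$, the map $z\mapsto e^{2\pi i z}$ identifies $\mathbb{H}/\langle T_0\rangle$ with $\mathbb{D}^*$, producing a holomorphic covering $\pi_*:\mathbb{D}^*\to U$. The horocyclic half-planes $\{\Im z>h\}$ descend to punctured neighborhoods of $0\in\mathbb{D}^*$ and, for $h$ large enough, are mapped by $\pi$ into $V^*$; hence $\pi_*(w)\to z_0$ as $w\to 0$. Riemann's removable singularity theorem (applied in a chart near $z_0$) then gives a holomorphic extension $\pi_*:\mathbb{D}\to\widehat U$ with $\pi_*(0)=z_0$.

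Third, I would verify the local behavior at $0$. The restriction of $\pi_*$ to a sufficiently small punctured disk around $0\in\mathbb{D}^*$ covers $V^*$ with local degree equal to the index $[\Gamma_\xi:\langle T_0\rangle]=1$, so $\pi_*|_{\mathbb{D}^*}$ is injective near $0$. This forces $\pi_*'(0)\neq 0$ and shows that the covering has local degree one there. The main obstacle is the parabolicity step in the first paragraph: showing that $T_0$ is parabolic and generates the full stabilizer is the key nontrivial input. It rests on the classical correspondence between punctures of hyperbolic surfaces and cusps of Fuchsian groups, or, alternatively, on a direct comparison of the hyperbolic metric of $U$ with the cusp metric of $V^*\cong\mathbb{D}^*$ via the Comparison Principle stated earlier.
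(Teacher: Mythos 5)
The paper never actually proves this lemma: it is stated as a known fact (the preliminaries explicitly defer proofs of known results to the literature), so there is no in-paper argument to compare yours with. Your outline is the standard puncture--cusp argument and its skeleton is correct: uniformize $U=\mathbb{D}/\Gamma$, note that a small loop around the isolated boundary point represents a nontrivial element $T_0$, establish that $T_0$ is parabolic, pass to the intermediate covering $\mathbb{H}/\langle T_0\rangle\cong\mathbb{D}^*\rightarrow U$, and extend over the puncture by removability. Two remarks on the inputs: the winding-number argument for nontriviality needs $U\subseteq\mathbb{C}$; if $\infty\in U$, first send another complementary point (there are at least three) to $\infty$ by a M\"obius transformation. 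Parabolicity of $T_0$ is indeed the key classical input, and the comparison you mention does prove it: the circles around $z_0$ have $U$-hyperbolic length tending to $0$ by comparison with $V^*$, while loops in the free homotopy class of a hyperbolic element have length bounded below by its translation length, and elliptics are excluded by torsion-freeness.

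The genuinely under-argued part is the final step. The claim that $\pi$ maps the horoballs $\{\Im z>h\}$ into $V^*$ for $h$ large is asserted, not proved, and is essentially the embedded cusp-neighborhood statement; and ``local degree equal to the index $[\Gamma_\xi:\langle T_0\rangle]=1$, hence injective near $0$'' conflates two things: the index only controls the covering $\mathbb{H}/\langle T_0\rangle\rightarrow\mathbb{H}/\Gamma_\xi$ near the cusp, while injectivity of the projection to $U$ near the cusp additionally requires a precisely invariant horoball (Shimizu--Leutbecher); the primitivity $\Gamma_\xi=\langle T_0\rangle$ itself also deserves a word (for plane domains it follows from divisibility of winding numbers about $z_0$). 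All of this can be bypassed: since $\iota_*\pi_1(V^*)=\langle T_0\rangle=(\pi_*)_*\pi_1(\mathbb{D}^*)$ up to conjugation, the inclusion $\iota:V^*\hookrightarrow U$ lifts through $\pi_*$ to a holomorphic $h:V^*\rightarrow\mathbb{D}^*$ with $\pi_*\circ h=\iota$; $h$ is injective (a lift of an injective map) and bounded, hence extends to $V$ with $h(z_0)\in\mathbb{D}$ by openness, and necessarily $h(z_0)=0$, since otherwise $z_0=\lim_{z\rightarrow z_0}\pi_*(h(z))=\pi_*(h(z_0))\in U$. Then $h(V)$ is an open neighborhood of $0$, and on $h(V)\backslash\{0\}$ one has $\pi_*=h^{-1}$; this yields at once $\pi_*(w)\rightarrow z_0$ as $w\rightarrow 0$, the holomorphic extension with $\pi_*(0)=z_0$ and $\pi_*'(0)=1/h'(z_0)\neq 0$, and degree one of $\pi_*|_{\mathbb{D}^*}$ near $0$. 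With that patch, only nontriviality and parabolicity of $T_0$ remain as the (classical) external inputs, and your proof is complete.
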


Note that given hyperbolic domains $U$ and $V$ such that $V\subset U$, according to Lemma \ref{contractiongeneral}, an upper bound for  $\frac{\lambda_U}{\lambda_V}$ is calculated only for points inside $V$. Assuming $U$ and $V$ have a common isolated boundary point $z_0$, the next proposition gives a condition for which $\frac{\lambda_U}{\lambda_V}$ can be estimated from above in any neighborhood of $z_0$.
\begin{prop}\label{mainprop}
Let $U$ be a hyperbolic domain with an isolated boundary point $z_0$. Let $\{w_n\}_n$ be a sequence in $U$ such that $w_n\rightarrow z_0$ as $n\rightarrow \infty$ and that $d_{U}(w_n,w_{n+1})\leq\delta$. Let $V=U\backslash\{w_n\}_n$. Given a simply connected and relatively compact neighborhood $\widetilde{\Omega}$ of $z_0$ in $U\cup\{z_0\}$, there exists $\kappa>0$, such that
\begin{equation*}
\forall z\in V\cap \widetilde{\Omega},\;\;\;\;0<\frac{\lambda_{U}(z)}{\lambda_V(z)}\leq\kappa<1.
\end{equation*}
\end{prop}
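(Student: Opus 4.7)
My plan is to apply Lemma~\ref{contractiongeneral} to the inclusion $V \subset U$, which yields $\lambda_U(z)/\lambda_V(z) \le \kappa(d_U(z,\partial V))$. Since $\partial U$ is at infinite hyperbolic distance in $U$, this reduces to producing a uniform upper bound for $\inf_n d_U(z,w_n)$ on $V\cap\widetilde{\Omega}$, small enough that $\kappa$ evaluated on it is strictly less than $1$. The argument naturally splits $\widetilde{\Omega}$ into a small simply connected neighborhood $\Omega_0$ of $z_0$ (chosen inside the chart supplied by Lemma~\ref{lem_covering}) and its complement $K=\overline{\widetilde{\Omega}}\setminus\Omega_0$, treating each piece separately.

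On the compact piece $K$: this is compact in $U$ because $z_0\notin K$, and it contains only finitely many of the $w_n$. The ratio $\lambda_U/\lambda_V$ is continuous on $V\cap K$, strictly less than $1$ pointwise by the Comparison Principle, and tends to $0$ near each $w_n\in K$ (since $\lambda_V\to\infty$ while $\lambda_U$ stays finite). Extending it continuously to $K$ by the value $0$ at each such $w_n$, compactness of $K$ forces $\sup_{V\cap K}\lambda_U/\lambda_V=:\kappa_1<1$.

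On the cusp piece $V\cap\Omega_0$: I would use Lemma~\ref{lem_covering} to identify a neighborhood of $0\in\mathbb{D}$ with $\Omega_0$ via the local biholomorphism $\pi_*$, placing us in the punctured-disk model of $U$ near $z_0$. Lifting the punctured disk by the universal cover $\mathbb{H}\to\mathbb{D}^*$, the chain condition $d_U(w_n,w_{n+1})\le\delta$ becomes a sequence of lifts $(x_n,y_n)\in\mathbb{H}$ with $y_n\to\infty$ and $|\log(y_n/y_{n+1})|\le\delta$, so that the heights $\{y_n\}$ fill every sufficiently large level up to multiplicative factor $e^\delta$. Combined with the unit horizontal period at each height $y$ (of hyperbolic length $1/y$), a direct computation in the half-plane metric gives that every point $z$ deep enough in the cusp satisfies $d_U(z,\{w_n\}) \le D := \delta/2 + 1/(2y)$ for some explicit $D$. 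Choosing $\Omega_0$ small enough forces $y$ large, so $D$ becomes as close as we wish to $\delta/2$; Lemma~\ref{contractiongeneral} then yields $\lambda_U/\lambda_V \le \kappa(D) =: \kappa_2$, and setting $\kappa=\max(\kappa_1,\kappa_2)$ concludes.

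The main obstacle is the cusp estimate, namely verifying that the $\delta$-chain of lifts is truly hyperbolically dense enough in the cusp to give a bound $D$ below the threshold where $\kappa(D)<1$; this pins down how small $\Omega_0$ must be taken relative to $\delta$ and to the local geometry of $U$ near $z_0$. A secondary technical point is the passage between the local punctured-disk metric produced by Lemma~\ref{lem_covering} and the intrinsic metric $\lambda_U$, which is not identical to it globally; these are reconciled via the Comparison Principle and Lemma~\ref{lem3}, the latter ensuring that bounded hyperbolic distance from a sequence converging to $z_0$ is preserved in the limit.
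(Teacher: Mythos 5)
Your proposal is essentially correct and matches the paper's own approach: both invoke Lemma~\ref{contractiongeneral} and Lemma~\ref{lem_covering} to reduce the claim to a uniform bound on $d_U(z,\{w_n\}_n)$ in the punctured-disk model near $z_0$, and both split $\widetilde{\Omega}$ into a compact piece away from the puncture (where you argue directly from continuity and compactness of the ratio $\lambda_U/\lambda_V$, the paper from comparability of the Euclidean and hyperbolic metrics) and the cusp (your half-plane computation is the $\mathbb{H}$-coordinate version of the paper's circle-arc estimate in $\mathbb{D}^*$). One clarification: the ``main obstacle'' you flag does not actually arise, because the bound $\kappa(d)$ of Lemma~\ref{contractiongeneral} is strictly below $1$ for \emph{every} finite $d$ --- writing $s=\frac{e^d-1}{e^d+1}\in(0,1)$ one has $\kappa(d)=-\frac{2s\log s}{1-s^2}$ and $-2s\log s<1-s^2$ on $(0,1)$ --- so any uniform finite bound on the hyperbolic distance to $\{w_n\}_n$ in the cusp suffices, and there is no threshold that your $D$ needs to be brought below.
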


\begin{proof}
We will use the contraction estimate given by (\ref{contractionestimate}) in Lemma \ref{contractiongeneral}. In order to do so, first we need to show for all $z\in V\cap \widetilde{\Omega}$, 
\begin{equation*}
d_{U}(z,\partial V)=d_{U}(z,\{w_n\}_n)<\infty.
\end{equation*}
There exists a holomophic covering map $\pi_{*}:\mathbb{D}^*\rightarrow U$, which has local degree $1$ in a neighborhood $\widehat{\Omega}\in\mathbb{D}$ of $0$ such that $\pi_{*}(0)=z_0$, by Lemma \ref{lem_covering}. The map $\pi_{*}$ is a local isometry  with respect to the hyperbolic metrics in $\mathbb{D}^*$ and $U$.  For points $\widehat{w}_n$ in $\widehat{\Omega}$ near $0$, with $\pi_{*}(\widehat{w}_n)=w_n\in\widetilde{\Omega}$, we have 
\begin{equation}\label{Eq}
d_{U}(w_n,w_{n+1})=d_{\mathbb{D}^*}(\widehat{w}_n,\widehat{w}_{n+1})\leq\delta.
\end{equation}
Let $\{\widehat{w}_n\}_n\subset\widehat{\Omega}$ be as above, and $\widehat{z}\in\widehat{\Omega}$ be such that $\pi_{*}(\widehat{z})=z$.  Then the problem reduces to proving
\begin{equation}\label{distanceinpunctureddisk}
\displaystyle{\sup_{\hat{z}\in\widehat{\Omega}}d_{\mathbb{D}^*}(\widehat{z},\{\widehat{w}_n\}_n)<\infty}.
\end{equation}
For any $0<r<R<1$, $A:=\widehat{\Omega}\cap\{\widehat{z}:\;\;r\leq |\widehat{z}|\leq R\}$  is compactly contained in $\mathbb{D}^*$. So the Euclidian and the hyperbolic distances are comparable in $A$, hence (\ref{distanceinpunctureddisk}) holds in $A$.

We need to prove this also holds in $\mathbb{D}^*(0,r):=\{\widehat{z}:\;0<|\widehat{z}|<r\}$ for small $r$ values. We estimate $d_{\mathbb{D}^*}(\widehat{z},\{\widehat{w}_n\}_n)$ in the following way: We take the sequence of circles $\{\mathbb{S}(0,r_n)\}_n$ centered at $0$ with Euclidian radius $r_n$, passing through the points of $\{\widehat{w}_n\}_n$ in $\mathbb{D}^*(0,r)$ (i.e., $\widehat{w}_n\in \mathbb{S}(0,r_n))$.  The hyperbolic length of $\mathbb{S}(0,r_n)$ in $\mathbb{D}^*$ is equal to $l_{\mathbb{D^*}}(\mathbb{S}(0,r_n))=-\frac{2\pi}{\log r_n}$. Observe that $l_{\mathbb{D^*}}(\mathbb{S}(0,r_n))\rightarrow 0$ as $r_n\rightarrow 0$. We will use the arcs on the circles as paths connecting the points of the sequence $\{\widehat{w}_n\}_n$, to the initial point $\widehat{z}$. Surely the hyperbolic length of these arcs is larger than $d_{\mathbb{D}^*}(\widehat{z},w_n)$.
\\
\\
\begin{figure}[htb!]
\begin{center}
\def\svgwidth{6 cm}
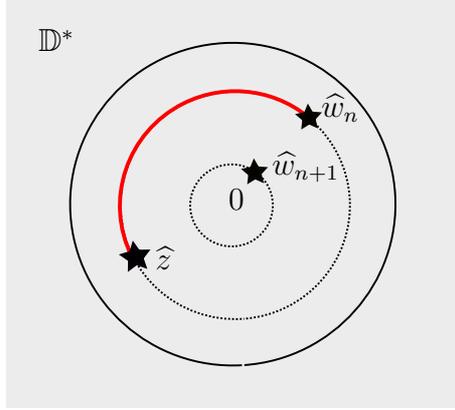
\caption{When $\widehat{z}$ is on the circle $\mathbb{S}(0,r_n)$.}\label{one}
\end{center}
\end{figure}

 Observe
\begin{itemize}
\item[i.] For any $\widehat{z}\in \mathbb{S}(0,r_n)$, $d_{\mathbb{D}^*}(\widehat{z},\widehat{w}_n)<-\frac{\pi}{\log r_n}$ (see Figure \ref{one}). 

\item[ii.] For a point $\widehat{z}$ in the annulus bounded by $\mathbb{S}(0,r_n)$ from outside and $\mathbb{S}(0,r_{n+1})$ from inside, $d_{\mathbb{D}^{*}}(\widehat{z},\widehat{w}_n)<\delta-\frac{\pi}{\log r_n}$, since $d_{\mathbb{D}^*}(\widehat{w}_n,\widehat{w}_{n+1})\leq\delta$ (see (\ref{Eq}) and Figure \ref{two}).
\end{itemize}
\vspace{5mm}

\begin{figure}[htb!]
\begin{center}
\def\svgwidth{6 cm}
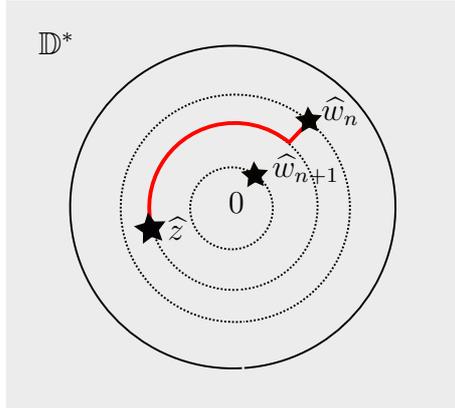
\caption{When $\widehat{z}$ is inside the annulus bounded by the circles $\mathbb{S}(0,r_n)$ and $\mathbb{S}(0,r_{n+1})$.}\label{two}
\end{center}
\end{figure}

We see that in both cases there exists $d>0$ such that $d_{\mathbb{D}^{*}}(\widehat{z},\{\widehat{w}_n\}_n)\leq d<\infty$. This means $d_{U}(z,\{w_n\}_n)\leq d<\infty$.

In this case, it is possible to find an upper bound $\kappa=\kappa(r_n,\delta)$. Depending on the position of $z$, we obtain 
\begin{equation*}
\frac{\lambda_{U}(z)}{\lambda_{V}(z)}\leq\kappa(r_n,\delta)=-\frac{e^{2d}-1}{2 e^{d}}\log\big(\frac{e^{d}-1}{e^{d}+1}\big)<1,
\end{equation*}
where $d:=\delta-\frac{\pi}{\log r_n}$, (case $i.$ for $\delta=0$) for all $z$ in a neighborhood  of $z_0$.
\end{proof}
\begin{rem}
Let $U$, $V$ be as in the statement of Proposition \ref{mainprop}. For any hyperbolic domain $W\subset V$, $0<\frac{\lambda_U(z)}{\lambda_W(z)}\leq\kappa<1$ holds in a neighborhood of $z_0$, by the Comparison Principle.
\end{rem}

\section{Setup}\label{sectionsetup}

Let $f$ and $\mathcal{P}$ be as in the statement of the Main Theorem. Take a $k$-periodic dynamic ray $g$. We denote by $U_0$, the unbounded connected component of $\mathbb{C}\backslash \mathcal{P}$, and by $U_1$, the connected component of $f^{-k}(U_0)$, which contains $g$. Since $\mathcal{P}$ contains at least two points, $U_0$ is hyperbolic. Since the restriction $f^k|_{U_1}:U_1\rightarrow U_0$ is a covering, it is a local hyperbolic isometry, that is, for the respective hyperbolic density functions $\lambda_{U_0}$ and $\lambda_{U_1}$, and for all $z\in U_1$,
\begin{equation*}
\lambda_{U_1}(z)|dz|=\lambda_{U_0}(f^k(z))|d(f^k(z))|=\lambda_{U_0}(f^k(z))|(f^k)'(z)||dz|
\end{equation*} 
is satisfied. In other words, $f^k$ preserves the hyperbolic arc lengths. Furthermore since the post-singular set is not backward invariant,  $U_1\subsetneq U_0$. Thus by the Comparison Principle,
\begin{equation*}
 \frac{\lambda_{U_0}(z)}{\lambda_{U_1}(z)}<1,\;\;\;\;\forall z\in U_1.
 \end{equation*}
 Let $F:[0,\infty)\rightarrow[0,\infty)$ denote a ray model dynamics as described in the introduction. Recall that for a $k$-periodic ray $f^k(g(t))=g(F^k(t))$. We denote a ray segment with end points $g(r_1)$ and $g(r_2)$ for $r_1,r_2\in (0,\infty)$, by $g[r_1,r_2]$.
Recall that a fundamental segment of the periodic ray $g$ is a ray segment with endpoints $g(t)$ and $g(F^k(t))$  for $t>0$, and we denote it by $g[t, F^k(t)] $.
\subsection{Study of nonrectifiable curves}
 It is not yet known for class $\widehat{\mathcal{B}}$, whether dynamic rays are rectifiable. Hence, a priori, fundamental segments in our setting may have infinite hyperbolic length. Let us denote the hyperbolic arc lengths in $U_0$ and $U_1$ by $l_{U_0}$ and $l_{U_1}$, respectively. Let $\Gamma[r_1,r_2]$ be the set of curves in $U_0$ with endpoints $g(r_1)$ and $g(r_2)$, which are in the same homotopy class with $g[r_1,r_2]$ relative to the endpoints. Take $\gamma^{*}[r_1,r_2]\in\Gamma[r_1,r_2]$ such that
\begin{equation*}
l_{U_0}(\gamma^{*}[r_1,r_2])=\inf_{\gamma\in\Gamma[r_1,r_2]}\int_{\gamma}\lambda_{U_0}(z)|dz|.
\end{equation*}
Note that $\gamma^{*}[r_1,r_2]$ is the unique geodesic connecting $g(r_1)$ and $g(r_2)$ in the same homotopy class with $g[r_1,r_2]$. Now define
\begin{equation*}
diam^*_{U_0} (g[t, F^k(t)]):=\sup_{t\leq r_1\leq r_2\leq F^k(t)}l_{U_0}(\gamma^{*}[r_1,r_2]).
\end{equation*}
In this setting, $diam^*_{U_0}(g[t,F^k(t)])$ is greater or equal to the hyperbolic diameter of $g[t,F^k(t)]$ in $U_0$.


\begin{lem}\label{nondecreasingsequence}
The map $M:(0,\infty)\rightarrow(0,\infty)$
\begin{equation*}
M(t):=\sup_{\tau\in[t,F^k(t)]}diam^*_{U_0}(g[\tau,F^k(\tau)])
\end{equation*}
 is nondecreasing.
\end{lem}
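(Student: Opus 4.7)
The plan is to reduce the lemma to the one-step ``quasi-monotonicity'' inequality
\begin{equation*}
\phi(\tau) \;\leq\; \phi(F^k(\tau)), \qquad \tau\in (0,\infty),
\end{equation*}
where I write $\phi(\tau):=diam^*_{U_0}(g[\tau,F^k(\tau)])$. Granting this, take $t_1\leq t_2$ and fix any $\tau\in[t_1,F^k(t_1)]$. If $\tau\geq t_2$, then $\tau\in[t_2,F^k(t_1)]\subset[t_2,F^k(t_2)]$ (since $F^k$ is increasing) and $\phi(\tau)\leq M(t_2)$ directly. Otherwise $\tau<t_2$; since $F$ is continuous, strictly increasing and unbounded, the $F^k$-orbit of $\tau$ escapes to $\infty$, so there is a smallest $n\geq 1$ with $F^{nk}(\tau)\geq t_2$. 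Minimality gives $F^{(n-1)k}(\tau)<t_2$, so by strict monotonicity $F^{nk}(\tau)=F^k(F^{(n-1)k}(\tau))<F^k(t_2)$; hence $F^{nk}(\tau)\in[t_2,F^k(t_2))$. Iterating the key inequality then yields $\phi(\tau)\leq \phi(F^{nk}(\tau))\leq M(t_2)$. Taking the supremum over $\tau\in[t_1,F^k(t_1)]$ gives $M(t_1)\leq M(t_2)$.

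To establish the key inequality I would use a lift-and-compare argument exploiting both the covering $f^k|_{U_1}\colon U_1\to U_0$ and the strict inclusion $U_1\subsetneq U_0$. Fix $r_1\leq r_2$ in $[\tau,F^k(\tau)]$ and set $s_i:=F^k(r_i)\in[F^k(\tau),F^{2k}(\tau)]$. Lift the $U_0$-geodesic $\gamma^*[s_1,s_2]$ through the covering, starting at $g(r_1)$ (which is a preimage of $g(s_1)$), obtaining a curve $\widetilde{\gamma}\subset U_1$. Because $f^k$ sends the path $g[r_1,r_2]$ onto $g[s_1,s_2]$, and because $\gamma^*[s_1,s_2]$ is $U_0$-homotopic with fixed endpoints to $g[s_1,s_2]$, unique path and homotopy lifting identifies $\widetilde{\gamma}$ as a curve from $g(r_1)$ to $g(r_2)$ in the $U_1$-homotopy class of $g[r_1,r_2]$, hence also in the corresponding $U_0$-class. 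Three standard estimates now combine: the covering is a local hyperbolic isometry, so $l_{U_1}(\widetilde{\gamma})=l_{U_0}(\gamma^*[s_1,s_2])$; the Comparison Principle (since $\lambda_{U_0}\leq \lambda_{U_1}$ on $U_1$) gives $l_{U_0}(\widetilde{\gamma})\leq l_{U_1}(\widetilde{\gamma})$; and the minimizing property of $\gamma^*[r_1,r_2]$ in its $U_0$-homotopy class gives $l_{U_0}(\gamma^*[r_1,r_2])\leq l_{U_0}(\widetilde{\gamma})$. Chaining these yields $l_{U_0}(\gamma^*[r_1,r_2])\leq l_{U_0}(\gamma^*[s_1,s_2])$, and taking the supremum over $(r_1,r_2)$ produces the key inequality.

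The main obstacle I expect is the homotopy bookkeeping for the lift: namely, checking that the lift of $\gamma^*[s_1,s_2]$ starting at $g(r_1)$ actually terminates at $g(r_2)$ and represents the correct $U_0$-homotopy class of $g[r_1,r_2]$, rather than ending at some other preimage of $g(s_2)$ or belonging to a different class. This is pinned down by unique path lifting for the covering $f^k|_{U_1}$ together with the observation that $\gamma^*[s_1,s_2]$ and $g[s_1,s_2]=f^k(g[r_1,r_2])$ represent the same homotopy class in $U_0$ by construction. Once this technical point is clarified, the rest of the argument is a short application of the Comparison Principle and the minimizing property of hyperbolic geodesics.
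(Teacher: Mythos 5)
Your proof is correct, and the core mechanism (lift a geodesic through the covering $f^k|_{U_1}\colon U_1\to U_0$, then chain the local-isometry identity, the Comparison Principle $\lambda_{U_0}\leq\lambda_{U_1}$, and the minimizing property of $\gamma^*$) is exactly the same engine the paper uses. What you do differently is the reduction: you isolate the one-step inequality $\phi(\tau)\leq\phi(F^k(\tau))$ and then derive monotonicity of $M$ purely combinatorially by iterating along the $F^k$-orbit of $\tau$ until it enters $[t_2,F^k(t_2))$. The paper instead picks a pair $(r_1,r_2)$ realizing $M(t_1)$ and compares it directly with $M(t_2)$ via $f^{nk}$, and then splits into the disjoint case $F^k(t_1)<t_2$ and the overlapping case $t_2<F^k(t_1)$; the latter is dispatched in the paper with a terse remark about an ``exceptional case'' that does not fully track what happens when the supremum defining $M(t_1)$ is attained at some $\tau<t_2$. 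Your reduction handles the disjoint and overlapping configurations uniformly and so is cleaner on that point. One small thing you leaned on implicitly: that $F^{nk}(\tau)\to\infty$ so the smallest $n$ with $F^{nk}(\tau)\geq t_2$ exists. This is true (ray points escape, and $f^{nk}(g(\tau))=g(F^{nk}(\tau))$), but it is worth a sentence, since ``continuous, unbounded, strictly increasing'' alone does not force $F(t)>t$. The homotopy-lifting bookkeeping you flagged as the main obstacle is handled correctly: since the homotopy in $U_0$ from $g[s_1,s_2]$ to $\gamma^*[s_1,s_2]$ is rel endpoints and $g[r_1,r_2]$ is the lift of $g[s_1,s_2]$ through $g(r_1)$, the lifted homotopy is rel endpoints in $U_1$, so $\widetilde{\gamma}$ ends at $g(r_2)$ and lies in the $U_1$-class of $g[r_1,r_2]$, hence in the $U_0$-class $\Gamma[r_1,r_2]$.
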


\begin{proof}

Observe that $M<\infty$, because the hyperbolic distance between any two points is bounded on $g[t,F^{2k}(t)]$, $t>0$, since $g[t,F^{2k}(t)]$ is compact. Take $t_1,t_2\in(0,\infty)$ such that $t_1<t_2$.
\begin{itemize}
\item[i.]  Suppose $F^k(t_1)<t_2$. Take  $g(r_1)\in g[t_1,F^{k}(t_1)]$ and $r_2$, such that
\begin{equation*}
l_{U_0}(\gamma^{*}[r_1,r_2])=M(t_1).
\end{equation*}
Observe that $g(r_2)\in g[t_1,F^{2k}(t_1)]$.
The point $g(r_1)$ eventually maps into $g[t_2,F^k(t_2)]$, say, after $nk$ iterates, i.e., 
\begin{equation*}
g(F^{nk}(r_1))\in g[t_2,F^k(t_2)].
\end{equation*}
In this case, 
\begin{equation*}
g(F^{nk}(r_2))\in g[t_2,F^{2k}(t_2)].
\end{equation*}
Take the curve $\gamma^{*}[F^{nk}(r_1),F^{nk}(r_2)]$. Obviously
\begin{equation}\label{geodesicinequality1}
l_{U_0}(\gamma^{*}[F^{nk}(r_1),F^{nk}(r_2)])\leq M(t_2).
\end{equation}
by hypothesis.

Let $\gamma^{i}[F^{(n-i)k}(r_1),F^{(n-i)k}(r_2)]$ be the lift of $\gamma^{*}[F^{nk}(r_1),F^{nk}(r_2)]$ by $f^{ik}$, with  endpoints $g(F^{(n-i)k}(r_1))$ and $g(F^{(n-i)k}(r_2))$. Observe that 
\begin{equation*}
\gamma^{i}[F^{(n-i)k}(r_1),F^{(n-i)k}(r_2)]\in\Gamma[F^{(n-i)k}(r_1), F^{(n-i)k}(r_2)]. 
\end{equation*} 
 This is because $f^{ik}$ is a holomorphic covering in $U_0$, and hence it has the homotopy lifting property. Now take $i=1$. The map $f^k|_{U_1}:U_1\rightarrow U_0$ is a local hyperbolic isometry, that is,
\begin{equation}\label{aa}
l_{U_0}(\gamma^{*}[F^{nk}(r_1),F^{nk}(r_2)])=l_{U_1}(\gamma^1[F^{(n-1)k}(r_1), F^{(n-1)k}(r_2)])
\end{equation}
holds. Moreover, since $U_1\subset U_0$, by the Comparison Principle,
\begin{equation}\label{bb}
l_{U_0}(\gamma^1[F^{(n-1)k}(r_1), F^{(n-1)k}(r_2)])<l_{U_1}(\gamma^1[F^{(n-1)k}(r_1), F^{(n-1)k}(r_2)]),
\end{equation}
and hence by (\ref{aa}) and (\ref{bb})
\begin{equation*}
l_{U_0}(\gamma^1[F^{(n-1)k}(r_1), F^{(n-1)k}(r_2)])<l_{U_0}(\gamma^{*}[F^{nk}(r_1),F^{nk}(r_2)]).
\end{equation*}
Applying pull-back $n$ times under $f^k$, we obtain:
\begin{equation}\label{geodesicinequality3}
l_{U_0}(\gamma^n[r_1,r_2])<l_{U_0}(\gamma^{*}[F^{nk}(r_1),F^{nk}(r_2)])
\end{equation}
Observe that
\begin{equation}\label{geodesicinequality2}
M(t_1)= l_{U_0}(\gamma^{*}[r_1,r_2)]\leq l_{U_0}(\gamma^n[r_1,r_2]).
\end{equation}
By (\ref{geodesicinequality1}), (\ref{geodesicinequality3})and (\ref{geodesicinequality2}), we obtain
\begin{equation}\label{M_1}
M(t_1)<M(t_2).
\end{equation}
\item[ii.] If $t_2<F^k(t_1)$, there may be an exceptional case, such that 
\begin{equation}\label{M_2}
M(t_1)=M(t_2)=
\sup_{\tau\in[t_1,F^k(t_2)]}diam^*_{U_0}(g[\tau,F^k(\tau)])
\end{equation}
Thus  (\ref{M_1}) and (\ref{M_2}) complete the proof.
\end{itemize}
\end{proof}

\section{Proof of the Main Theorem}
Let $W$ denote the accumulation set of the $k$-periodic dynamic ray $g$ as $t\rightarrow 0$. We divide the proof into two cases, according to whether an accumulation point $w\in W$ is in $U_0$, or in $\widehat{\mathbb{C}}\backslash U_0$. Recall that $U_0$ is the unbounded connected component of $\mathbb{C}\backslash \mathcal{P}$, where $\mathcal{P}$ is the post-singular set, $U_1$ is the set for which $f^k|_{U_1}:U_1\rightarrow U_0$ is a covering, and which contains $g$.
\subsection{When the accumulation point  is in $U_0$}

Suppose  $w\in U_0$ is an accumulation point of $g$ as $t\rightarrow 0$. Take a point $t_0\in\mathbb{R}$, such that $d_{U_0}(g(t_0),w)<N<\infty$. Take a sequence $\{t_n\}_n$, starting from $t_0$, such that $g(t_n)\rightarrow w$, and $d_{U_0}(g(t_n),w)<N$. Write $M:=M(t_0)$. Then, all double-fundamental segments $g[t_n, F^{2k}(t_n)]$ are going to be in the closed hyperbolic ball $\Omega:=\overline{\mathbb{D}_{U_0}(w,2M+N)}$ of center $w$ and radius $2M+N$ by Lemma \ref{nondecreasingsequence} (see Figure \ref{figure1}). 

\begin{figure}[htb!]
\begin{center}
\def\svgwidth{8 cm}
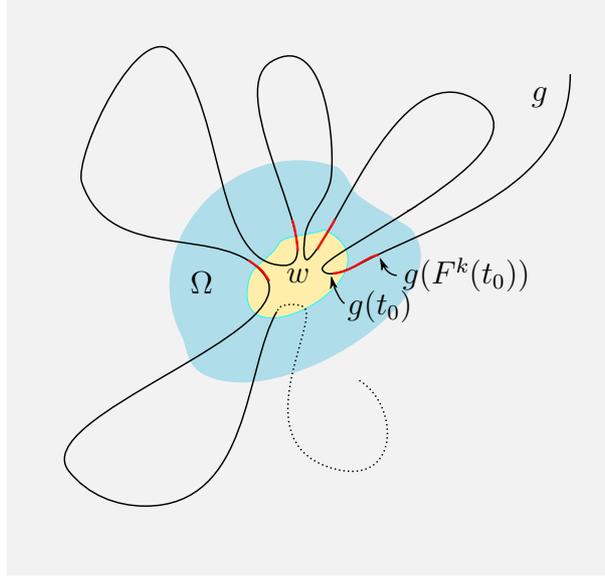
\caption{\small{All fundamental segments, starting at $g[t_n,F^k(t_n)]$, $n\geq 0$ are inside $\Omega$. In order to have a clear picture, the dynamic ray $g$ is assumed to be smooth.}}
\label{figure1}
\end{center}
\end{figure}

Assume
\begin{equation*}
F^{2k}(t_{n+1})<t_n
\end{equation*}
for all $n\in\mathbb{N}$, passing to a subsequence, if necessary. Since $\Omega$ is compact and the contraction ratio $\kappa(z):=\frac{\lambda_{U_0}(z)}{\lambda_{U_1}(z)}$ is continuous, the function $\kappa(z)$ attains its maximum value at some point $z_0\in\Omega$, i.e., for all $z\in\Omega$, $\kappa(z)\leq\kappa:=\kappa(z_0)<1$. This yields, 
\begin{equation*}
M(t_{n+1})\leq\kappa M(t_n)\leq \kappa^{n+1}M(t_0)\rightarrow 0, \;\;\mathrm{as}\;\;n\rightarrow\infty
\end{equation*}

(see Lemma \ref{nondecreasingsequence}). This implies
\begin{equation*}
diam^*_{U_0}(g[t_n,F^k(t_n)])\rightarrow 0, \mathrm{as}\;\; n\rightarrow \infty,
\end{equation*}
which means $d_{U_0}(g[t_n,F^k(t_n)])\rightarrow 0$, and hence $w=f^k(w)$ for the limit point $w$.
\subsection{When the accumulation point  is not in $U_0$}

In this case $w\in\partial U_0$.  First, we are going to show that $\infty$ is not an accumulation point of a periodic dynamic ray as $t\rightarrow 0$.
\subsubsection{Study in a neighborhood of \texorpdfstring{$\infty$}{infinity}}
Our goal is to use Proposition \ref{mainprop} in order to conclude that the inverse branches of $f^k$ are strongly contracting with respect to the hyperbolic metric in $U_0$ in a neighborhood of $\infty$. In that case, the iterative pull-backs of $\gamma^*[t_0,F^k(t_0)]$ along the ray will be bounded by   
\begin{equation*}
\sum_{n=0}^{\infty} \kappa^n M=\frac{M}{1-\kappa}<\infty,
\end{equation*}
that is,
\begin{equation*}
\sum_{n=0}^{\infty}l_{U_0}(\gamma^*[t_n,F^k(t_n)])<\frac{M}{1-\kappa}<\infty.
\end{equation*}
So the existence of such $\kappa$ in a neighborhood of $\infty$ guarantees that the periodic ray sufficiently close to $\infty$ for small potentials lands before reaching $\infty$.

We will check that the hypothesis of Proposition \ref{mainprop} are satisfied. We take the isolated boundary point $z_0=\infty$. We need to show the existence of a sequence $\{w_i\}_i\subset U_0\backslash U_1$ such that $w_i\rightarrow\infty$ as $i\rightarrow\infty$ with $d_{U_0}(w_i,w_{i+1})\leq \delta$.  
This is given by the following lemma.

\begin{figure}[htb!]
\begin{center}
\def\svgwidth{8 cm}
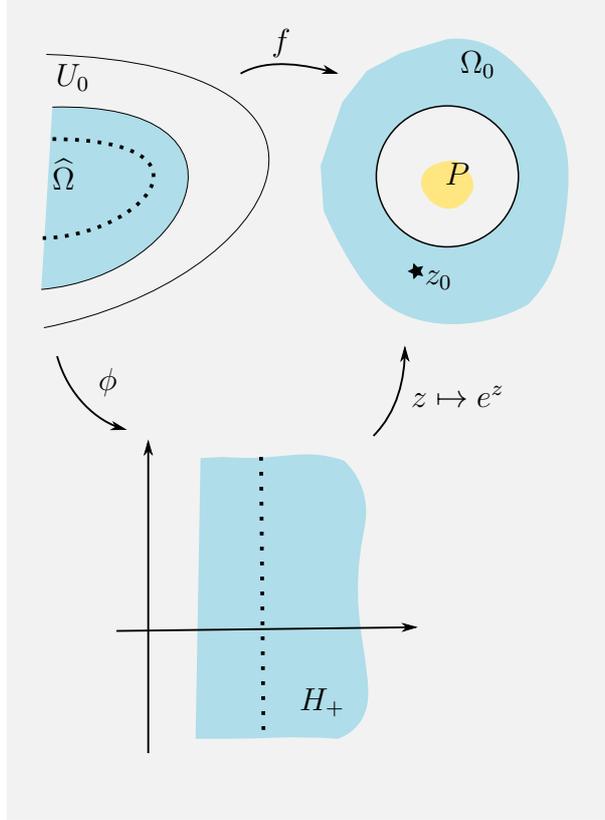
\caption{\small{The preimages of $z_0$ under $f$  are tending to $\infty$ in $\widehat{\Omega}$.}}
\label{figure}
\end{center}
\end{figure}
\begin{lem}\label{lemmainaneighbor}
$U_0\backslash U_1$ contains a sequence of points $\{w_i\}_{i\in\mathbb{Z}}$, such that  $w_i\rightarrow \infty$ as $i\rightarrow\pm\infty$ with $d_{U_0}(w_i,w_{i+1})<\delta$ for some $\delta>0$.
\end{lem}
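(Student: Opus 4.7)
The plan is to exhibit the sequence $\{w_i\}_{i\in\mathbb{Z}}$ as preimages under $f$ of a suitably chosen $z_0\in\mathcal{P}$, arranged using the logarithmic tract structure available for $f\in\widehat{\mathcal{B}}$.

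Concretely, I would first fix $R>0$ so that $\mathcal{P}\cup\mathcal{S}(f)\subset D(0,R)$, which is possible because $\mathcal{P}$ is bounded. Then $\{|z|>R\}\subset U_0$, so $\infty$ is an isolated boundary point of $U_0$, and the Comparison Principle yields $\lambda_{U_0}(z)\leq (|z|\log(|z|/R))^{-1}$ for $|z|>R$. By the tract theory for class $\widehat{\mathcal{B}}$ (\cite{rrrs2011}), every component $T$ of $f^{-1}(\mathbb{C}\setminus\overline{D(0,R)})$ is simply connected and unbounded, $f|_T\colon T\to \mathbb{C}\setminus\overline{D(0,R)}$ is a universal covering, and there is a biholomorphism $\phi\colon T\to H_R:=\{\operatorname{Re}w>\log R\}$ with $f|_T=\exp\circ\phi$, extending continuously to $\overline{T}$ and sending $\partial T$ onto $\{|w|=R\}$. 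Next, I would pick $z_0\in\mathcal{P}$ that is not a Picard exceptional value of $f$; such $z_0$ exists because $\mathcal{P}$ contains at least two points in $\mathbb{C}$ (as $U_0$ is hyperbolic). By the Great Picard Theorem, $f^{-1}(z_0)$ is infinite and discrete with only accumulation point $\infty$. The forward-invariance $f(\mathcal{P})\subset\mathcal{P}$ iterates to $\mathcal{P}\subset f^{-(k-1)}(\mathcal{P})$, so $f^{-1}(z_0)\subset f^{-k}(\mathcal{P})\subset\mathbb{C}\setminus U_1$, and any $w\in f^{-1}(z_0)$ with $|w|>R$ lies in $U_0$, hence in $U_0\setminus U_1$.

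Since $|z_0|<R$, no preimage of $z_0$ lies inside any tract $T$; they sit in the components of $f^{-1}(\overline{D(0,R)})$ adjacent to tracts. Near a fixed tract $T$, the $2\pi i$-periodicity of $\exp$ pulled back through $\phi$ partitions the complement along $\partial T$ into a $\mathbb{Z}$-indexed family of ``fundamental pieces'', each of which (for $|n|$ large) carries a unique preimage $w_n$ of $z_0$; this gives the required enumeration with $w_n\to\infty$ as $n\to\pm\infty$. To bound $d_{U_0}(w_n,w_{n+1})$, I would connect $w_n$ and $w_{n+1}$ by an arc $\gamma_n\subset\{|z|>R\}$ that first enters a neighborhood of $\partial T$, then follows one ``fundamental period'' of $\partial T$ (the $\phi$-image of $\{\operatorname{Re}w=\log R,\ \operatorname{Im}w\in[2\pi n,2\pi(n+1)]\}$), and finally runs to $w_{n+1}$. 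Combining $\lambda_{U_0}(z)\leq(|z|\log(|z|/R))^{-1}$ with a polynomial Euclidean length bound on $\gamma_n$ coming from the class-$\widehat{\mathcal{B}}$ tract geometry gives $d_{U_0}(w_n,w_{n+1})\to 0$ as $n\to\pm\infty$, and in particular $\sup_n d_{U_0}(w_n,w_{n+1})\leq\delta<\infty$.

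The main obstacle is the combinatorial and geometric indexing in the previous paragraph: one must use the continuous extension of $\phi$ to $\overline{T}$ and the regularity of tracts for class-$\widehat{\mathcal{B}}$ functions to show that the preimages of $z_0$ adjacent to $T$ are actually in bijection with $\mathbb{Z}$ for $|n|$ large, and that consecutive preimages can be joined by arcs whose Euclidean length is $\ll|w_n|\log|w_n|$. This last estimate is the technical heart of the proof and is where the hypotheses ``finite-order factors'' and ``bounded singular set'' from class $\widehat{\mathcal{B}}$ are genuinely used; they prevent the tracts from oscillating too violently near $\infty$.
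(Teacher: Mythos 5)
Your proposal starts from the same general idea as the paper --- produce the sequence as the $f$-preimages of a suitably chosen $z_0$ whose $k$-th forward image lands in $\mathcal{P}$, and control the hyperbolic distances using the tract structure and the Comparison Principle --- but the choice $z_0\in\mathcal{P}$ is the wrong one, and it creates a gap you do not close. Because $|z_0|<R$, you correctly observe that $f^{-1}(z_0)$ lies \emph{outside} the tracts; you then need to (a) produce a $\mathbb{Z}$-indexed enumeration of these preimages adjacent to a single tract, and (b) join consecutive ones by arcs whose hyperbolic $U_0$-length is uniformly bounded. You frame (b) as a Euclidean-length estimate ``$\ll |w_n|\log|w_n|$'' coming from ``class-$\widehat{\mathcal{B}}$ tract geometry'' and explicitly call it the technical heart of the argument, but you never establish it. There is no a priori control on the Euclidean geometry of the arcs along $\partial T$ for a general $f\in\widehat{\mathcal{B}}$ from the ingredients you have laid out, and the bijection with $\mathbb{Z}$ of preimages adjacent to a fixed tract is likewise asserted rather than proven. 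This is a genuine missing step, not a routine one.

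The paper sidesteps all of this by taking $z_0\in\Omega_0=\mathbb{C}\setminus\overline{\mathbb{D}(0,R)}$ (i.e.\ \emph{outside} the disk) with $f^{k-1}(z_0)\in\mathcal{P}$; the existence of such a $z_0$ follows from Picard applied to a point of $\mathcal{P}$, exactly as in your argument. With this choice, the $\mathbb{Z}$-indexed family $\{w_i\}=(f|_{\widehat\Omega})^{-1}(z_0)$ lies entirely \emph{inside} the simply connected tract $\widehat\Omega$, where $\phi:\widehat\Omega\to H_+$ with $\exp\circ\phi=f$ is a conformal isomorphism. The $\widehat{w}_i=\phi(w_i)$ are $2\pi i$-translates in $H_+$, so $d_{\widehat\Omega}(w_i,w_{i+1})=d_{H_+}(\widehat{w}_i,\widehat{w}_{i+1})$ is a constant $\delta$, and $d_{U_0}(w_i,w_{i+1})<\delta$ then follows from $\widehat\Omega\subset U_0$ and the Comparison Principle. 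No Euclidean estimates, no fundamental-piece combinatorics, and --- notably --- no use of the finite-order part of the class-$\widehat{\mathcal{B}}$ hypothesis: bounded singular set and the simple connectivity of a tract suffice. Your attribution of the estimate to the finite-order tract regularity is therefore not only unproven but also misleading about what hypotheses the lemma actually needs.
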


\begin{proof} Let $\mathbb{D}(0,R)$ denote the disk with radius $R$, centered at $0$, such that $\mathcal{P}\subset \overline{\mathbb{D}(0,R)}$, and set $\Omega_0:=\mathbb{C}\backslash\overline{\mathbb{D}(0,R)}$. Then there exists a simply connected domain  $\widehat{\Omega}\subset \mathbb{C}$, and the restriction $f|_{\widehat{\Omega}}:\widehat{\Omega}\rightarrow\Omega_0$ is a universal covering (such $\widehat{\Omega}$ is called an exponential tract). Let $z_0\in \Omega_0$ such that $f^{k-1}(z_0)\in \mathcal{P}$. The pre-images $\{w_i\}_{i\in\mathbb{Z}}$ of $z_0$ under $f$ tends to $\infty$ as $i\rightarrow\pm\infty$. We claim that, it is a sequence of equally spaced points, with respect to the hyperbolic metric in $\widehat{\Omega}$. Indeed, since $\widehat{\Omega}$ is simply connected, there exists a biholomorphic map $\phi:\widehat{\Omega}\rightarrow H_{+}$ to some right half plane $H_{+}$ such that $\exp\circ\phi=f$ (see Figure \ref{figure}). Let $\{\widehat{w}_i\}_{i\in\mathbb{Z}}$ be such that $e^{\widehat{w}_i}=z_0$. Observe that the consecutive elements of $\{\widehat{w}_i\}_{i\in\mathbb{Z}}$ are $2\pi i$ translates of each other in $H_{+}$. Set $\delta:=d_{H_{+}}(\widehat{w}_i,\widehat{w}_{i+1})$, where $d_{H_{+}}$ denotes the hyperbolic distance in $H_{+}$. Observe that $\delta$  is independent of $i$. Write $w_i=\phi^{-1}(\widehat{w}_i)$. Since $\phi^{-1}$ is conformal, it preserves the respective hyperbolic distances, i.e.,
\begin{eqnarray*}
\delta=d_{H_{+}}(\widehat{w}_i,\widehat{w}_{i+1})&=&d_{\widehat{\Omega}}(\phi^{-1}(\widehat{w}_i),\phi^{-1}(\widehat{w}_{i+1})\notag\\
&=&d_{\widehat{\Omega}}(w_i,w_{i+1}).
\end{eqnarray*}
Since $\widehat{\Omega}\subset U_0$, this implies $d_{U_0}(w_i, w_{i+1})< d_{\widehat{\Omega}}(w_i,w_{i+1})=\delta$, by the Comparison Principle.
\end{proof}
With Lemma \ref{lemmainaneighbor}, we conclude that all accumulation points of periodic rays as $t\rightarrow 0$ are finite. If $w$ is a boundary point of $U_0$, then it is a common boundary point with $U_1$. 
Let $g(t_0)\in U_0$. Recall 
\begin{equation*}
M(t_0)=\sup_{\tau\in[t_0,F^k(t_0)]}diam^*_{U_0}(g[\tau, F^k(\tau)]). 
\end{equation*}
Take a sequence $\{t_n\}_n$ such that $t_n\rightarrow 0$ and $g(t_n)\rightarrow w\in\partial U_0$ as $n\rightarrow \infty$. Then by Lemma \ref{nondecreasingsequence},
\begin{equation*}
diam^*_{U_0}(g[t_n,F^k(t_n)])\leq M(t_0)
\end{equation*}
for all $t_n<t_0$, $n>0$. By Lemma \ref{lem3}, this implies that $g(F^k(t_n))\rightarrow w$, since
 $g(t_n)\rightarrow w\in\partial U_0$. Hence for the limit point $w$, the equality $w=f^k(w)$ is satisfied.

\subsection{Conclusion of the proof of the Main Theorem}

To summarize, no matter whether the accumulation point $w$ is in $U_0$, or not, it satisfies  $w=f^k(w)$. To see that this is the unique limit point, take another accumulation point $w'$ of $g$ as $t\rightarrow 0$. Repeating the same argument, we obtain $w'=f^k(w')$. But the solution set of this equation is discrete while the limit set is connected, as being the limit set of a connected set. Therefore the accumulation set consists of only one point. By the Snail Lemma (see, for example \cite[Lem 16.2]{mil2006}), $w$ is either a parabolic or a repelling periodic point of period $k$. In particular, in case $w\in U_0$, since the  inverse branches of $f^k$ are strongly  contracting with respect to the hyperbolic metric in $U_0$, the limit point is repelling. This completes the proof.


\bibliographystyle{plain}
\end{document}